\tikzset{->-/.style={decoration={
			markings,
			mark=at position #1 with {\arrow{>}}},postaction={decorate}}}
\newtheorem{theorem}{Theorem}[section]
\newtheorem{lemma}[theorem]{Lemma}
\newtheorem{proposition}[theorem]{Proposition}
\newtheorem{corollary}[theorem]{Corollary}
\newtheorem*{corollary*}{Corollary}
\newtheorem{atheorem}{Theorem}
\theoremstyle{definition}
\newtheorem{convention}[theorem]{Convention}
\theoremstyle{remark}
\newtheorem{remark}[theorem]{Remark}
\newtheorem*{remark*}{Remark}
\renewcommand{\rm}[1]{{\mathrm{#1}}}
\newcommand{\lra}{\longrightarrow}
\newcommand{\bb}[1]{{\mathbb{#1}}}
\newcommand{\id}{\rm{id}}
\newcommand{\Top}{\rm{Top}}
\newcommand{\OO}{\rm{O}}
\newcommand{\GG}{\rm{G}}
\newcommand{\PL}{\rm{PL}}
\newcommand{\Sq}{\rm{Sq}}
\newcommand{\circled}[1]{\raisebox{.5pt}{\textcircled{\raisebox{-.9pt} {#1}}}}
\DeclareFontFamily{U}{min}{}
\DeclareFontShape{U}{min}{m}{n}{<-> udmj30}{}
\title{The first two $k$-invariants of $\Top/\OO$}
\author{Alexander Kupers}
\address{Department of Computer and Mathematical Sciences \\ 
	University of Toronto Scarborough \\
	1265 Military Trail, Toronto, ON M1C 1A4 \\
	Canada}
\email{a.kupers@utoronto.ca}
\begin{document}

\begin{abstract}We show that the first two $k$-invariants of $\Top/\OO$ vanish and give some applications.\end{abstract}

\maketitle 

In this note we investigate the $k$-invariants of the infinite loop space $\Top/\OO$, which plays an important role in smoothing theory of topological manifolds. Its homotopy groups $\pi_k(\Top/\OO)$ are given by $\bb{Z}/2$ if $k = 3$ (corresponding to the Kirby--Siebenmann invariant) and the group $\Theta_k$ of oriented homotopy $k$-spheres otherwise (determined by Kervaire--Milnor). We will prove that its first two $k$-invariants vanish, which amounts to:

\begin{atheorem}\label{athm:main} There exists a $9$-connected map
	\[\Top/\OO \lra K(\bb{Z}/2,3) \times K(\bb{Z}/28,7) \times K(\bb{Z}/2,8).\]
\end{atheorem}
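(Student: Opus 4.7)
My plan is to work at the level of connective spectra. Let $\rm{pl}/\rm{o}$, $\rm{top}/\rm{o}$, $\rm{top}/\rm{pl}$ denote the connective spectra whose underlying infinite loop spaces are $\PL/\OO$, $\Top/\OO$, $\Top/\PL$; Kirby--Siebenmann provides an equivalence $\rm{top}/\rm{pl} \simeq H\bb{Z}/2[3]$ and a cofibre sequence
\[ \rm{pl}/\rm{o} \lra \rm{top}/\rm{o} \lra H\bb{Z}/2[3]. \]
Since $\Theta_k = 0$ for $1 \le k \le 6$, the spectrum $\rm{pl}/\rm{o}$ is $6$-connected with $\pi_7 = \bb{Z}/28$ and $\pi_8 = \bb{Z}/2$. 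The theorem is equivalent to an equivalence $\tau_{\le 8}\,\rm{top}/\rm{o} \simeq H\bb{Z}/2[3] \vee H\bb{Z}/28[7] \vee H\bb{Z}/2[8]$ of connective spectra, from which the required $9$-connected map follows by taking underlying infinite loop spaces.

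I would establish this equivalence in two steps. First, I would show the internal $k$-invariant of $\rm{pl}/\rm{o}$ between $\pi_7$ and $\pi_8$ vanishes, giving $\tau_{[7,8]}\,\rm{pl}/\rm{o} \simeq H\bb{Z}/28[7] \vee H\bb{Z}/2[8]$; the candidate class lives in $H^2(H\bb{Z}/28;\bb{Z}/2) \cong \bb{Z}/2 \cdot \Sq^2$, and this vanishing is essentially Brumfiel's splitting of $\PL/\OO$, which I would either cite or reverify via the Eells--Kuiper $\mu$-invariant on homotopy $7$-spheres. Second, I would show the attaching map $H\bb{Z}/2[3] \to \Sigma\, \tau_{\le 8}\,\rm{pl}/\rm{o}$ classifying the extension is null; after step one it is classified by a pair of classes in $H^5(H\bb{Z}/2;\bb{Z}/28)$ and $H^6(H\bb{Z}/2;\bb{Z}/2)$, which a short Steenrod-algebra calculation (with a Bockstein for the $\bb{Z}/4$-summand of $\bb{Z}/28$) shows to be small finite groups with explicit generators.

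My strategy for killing these two candidate classes is to construct topological lifts, to $\Top/\OO$, of the smooth Eells--Kuiper and Arf--Kervaire invariants: namely a class $\mu_{\rm{top}} \in H^7(\Top/\OO; \bb{Z}/28)$ realizing an isomorphism on $\pi_7$, and a class $c \in H^8(\Top/\OO; \bb{Z}/2)$ realizing an isomorphism on $\pi_8$. Their existence forces the two attaching-map components---hence the first two $k$-invariants of $\Top/\OO$---to vanish, and together with the Kirby--Siebenmann class $\kappa \in H^3(\Top/\OO; \bb{Z}/2)$ they assemble into the desired $9$-connected map. The main obstacle is constructing $\mu_{\rm{top}}$ and $c$: topological manifolds carry only rational Pontryagin classes (Novikov), so integrally- or torsion-valued refinements require either a careful $2$-local obstruction argument using extra input from the Kirby--Siebenmann class, or an explicit characteristic-class calculation in $H^*(B\Top; \bb{Z}/28)$ and $H^*(B\Top; \bb{Z}/2)$.
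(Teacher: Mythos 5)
Your reductions are all sound but they stop exactly where the content of the theorem begins. Passing to connective spectra, quoting the splitting $\tau_{[7,8]}\PL/\OO \simeq K(\bb{Z}/28,7)\times K(\bb{Z}/2,8)$ (the paper also takes this as a citation), and identifying the two remaining obstruction groups is a correct reformulation; and it is true that the existence of classes $\mu_{\mathrm{top}} \in H^7(\Top/\OO;\bb{Z}/28)$ and $c \in H^8(\Top/\OO;\bb{Z}/2)$ detecting $\pi_7$ and $\pi_8$ would finish the proof. But that existence statement is \emph{tautologically equivalent} to the vanishing of the first two $k$-invariants, so reducing to it proves nothing. Moreover the obstruction groups you identify are not zero (e.g.\ the first $k$-invariant lives, $2$-locally, in $H^8(K(\bb{Z}/2,3);\bb{Z}/4)$, a nontrivial $2$-torsion group, and your $H^6(H\bb{Z}/2;\bb{Z}/2)$ is a $3$-dimensional piece of the Steenrod algebra), so no Steenrod-algebra bookkeeping of ``small finite groups with explicit generators'' can kill the classes by itself: some geometric or comparison input is required, and your proposal explicitly defers that input (``the main obstacle is constructing $\mu_{\mathrm{top}}$ and $c$\,'') to an unspecified obstruction argument or characteristic-class computation in $H^*(B\Top;-)$. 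As written this is a plan, not a proof, and the hard step --- extending torsion refinements of smooth invariants to $\Top$, where only rational Pontryagin classes are available --- is precisely the step left blank.

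The input the paper actually uses, and which your sketch is missing, is the pullback square of infinite loop spaces relating $\Top/\OO$, $\Top/\PL$, $\GG/\OO$ and $\GG/\PL$, combined with Madsen--Milgram's $2$-local product splittings of $\GG/\PL$ and $\GG/\Top$. These show that the map $\Top/\PL \simeq K(\bb{Z}/2,3) \to \GG/\PL\langle 2\rangle$ factors through a single factor $K(\bb{Z},4)$, so that both $k$-invariants are pulled back, along a map $K(\bb{Z}/2,3) \to K(\bb{Z},4)$ and via primitive classes, from $H^8(K(\bb{Z},4);\bb{Z}/4)$ and $H^9(K(\bb{Z},4);\bb{Z}/2)$. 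The second group vanishes outright, and the only nonzero primitive in the first is $2\cdot\overline{\iota}_4^2$, which necessarily dies in the $2$-torsion group $H^8(K(\bb{Z}/2,3);\bb{Z}/4)$. If you want to salvage your route, you must either supply an actual construction of a topological Eells--Kuiper class and a topological degree-$8$ invariant --- a nontrivial project in its own right --- or replace it with a comparison argument of the above kind that imports the needed vanishing from a space whose cohomology you can compute.
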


\begin{remark*}\,
	\begin{itemize} 
		\item It remains an open question whether there is a preferred choice of map as in \cref{athm:main}.
		\item It remains an open question whether there is an equivalence of spectra $\rm{top/o} \simeq \Sigma^3 H\bb{Z}/2 \vee \Sigma^7 H\bb{Z}/28 \vee \Sigma^8 H\bb{Z}$.
		\item It is a direct consequence of \cref{athm:main} that the map
		\[[M,\Top/\OO] \lra [M,K(\bb{Z}/2,3) \times K(\bb{Z}/28,7) \times K(\bb{Z}/2,8)]\]
		is a bijection if $\dim(M) \leq 8$, and a surjection if $\dim(M) = 9$. Since the map
		\begin{equation}\label{eqn:maps}[M,\Top/\OO] \lra [M,\Top/\PL]\end{equation}
		agrees under this bijection with projection to $[M,K(\bb{Z}/2,3)]$, it is surjective if dimension $\dim(M) \leq 9$. Through smoothing theory, this yields a rephrasing of \cite[Corollary 6.3]{MoritaSmooth}, \emph{``smoothability of 8 and 9 PL manifolds is topologically invariant''}, which means that if two PL-manifolds $M_\alpha$ and $M_\beta$ are homeomorphic then $M_\alpha$ admits a compatible smooth structure if and only if $M_\beta$ does.
		\item On the other hand \cite[Theorem 6.1]{MoritaSmooth} says that there are smooth manifolds of dimension $\geq 22$ with a PL-structure on their underlying topological manifolds that does not admit a compatible smooth structure. In fact, Morita's proof uses that the 10-skeleton of this $22$-dimensional manifold can not be smoothed. Applying the above reasoning, this implies that the next $k$-invariant of $\Top/\OO$ must be non-zero.
		\item \cref{athm:main} is used in \cite[Addendum 1.2]{BustamanteTshishiku}, which concerns the classification of free,
		orientation-preserving actions of finite groups on 7-dimensional aspherical space forms. Indeed, a question in the previous version of this paper inspired us to prove \cref{athm:main}.
	\end{itemize}
\end{remark*}

\subsection*{Acknowledgements} AK thanks Mauricio Bustamante, Darmuid Crowley, Ian Hambleton, and Bena Tshishiku for helpful conversations and comments on earlier versions of this note, as well as the anonymous referee for several improvements to the arguments. AK acknowledges the support of the Natural Sciences and Engineering Research Council of Canada (NSERC) [funding reference number 512156 and 512250]. AK is supported by an Alfred P.~Sloan Research Fellowship. 

\section{Preliminaries and strategy} The space $\Top/\OO$ is 1-connected and has finite homotopy groups, so as a consequence of the arithmetic fracture square it is the product of its $p$-localisations. Moreover, in the range of interest for \cref{athm:main} the $k$-invariants are 2-local so it suffices to work 2-locally, that is, we invert all primes \emph{except} 2.

\begin{convention}In the remainder of this note all spaces, spectra, and maps will be 2-localised.\end{convention}

There is a fibre sequence of infinite loop spaces $\PL/\OO \to \Top/\OO \to \Top/\PL \simeq K(\bb{Z}/2,3)$, corresponding to a fibre sequence of connective spectra 
\[\rm{pl/o} \lra \rm{top/o} \lra \rm{top/pl} \simeq \Sigma^3 H\bb{Z}/2,\] so we can recover $\rm{top/o}$ as the fibre of $\rm{top/pl} \simeq \Sigma^3 H\bb{Z}/2 \to \Sigma\, \rm{pl/o}$. As the Postnikov truncation $\tau_{\leq 8} \rm{pl/o}$ is equivalent to a wedge sum of Eilenberg--Mac Lane spectra $\Sigma^7 H\bb{Z}/4 \vee \Sigma^8 H\bb{Z}/2$ \cite{Jahren,BKM}, we have maps of infinite loop spaces
\[\Top/\OO \lra K(\bb{Z}/2,3) \lra K(\bb{Z}/4,8) \times K(\bb{Z}/2,9)\]
so that the map from $\Top/\OO$ to the fibre of the right map is $9$-connected. The right map is determined by a pair of cohomology classes $k_1 \in H^8(K(\bb{Z}/2,3);\bb{Z}/4)$ and $k_2 \in H^9(K(\bb{Z}/2,3);\bb{Z}/2)$. \cref{athm:main} would follow from:

\begin{atheorem}\label{athm:k} Both $k_1$ and $k_2$ are zero.
\end{atheorem}

\begin{remark}
The class $k_1 \in H^8(K(\bb{Z}/2,3);\bb{Z}/4)$ is the first $k$-invariant of $\Top/\OO$. Given that it vanishes, the second $k$-invariant of $\Top/\OO$ lies in $H^9(K(\bb{Z}/2,3) \times K(\bb{Z}/4,8);\bb{Z}/2)$, which is equal to $H^9(K(\bb{Z}/2,3);\bb{Z}/2) \oplus H^9(K(\bb{Z}/4,8);\bb{Z}/2)$ by the K\"unneth theorem, and with respect to this decomposition the second $k$-invariant is given by $(k_2,0)$.
\end{remark}

Our strategy is to start with the pullback square of infinite loop spaces
\[\begin{tikzcd} \Top/\OO \rar \dar & \GG/\OO \dar \\
	\Top/\PL \rar & \GG/\PL.\end{tikzcd}\]
It will follow from the work of Madsen and Milgram that the horizontal maps are surjective on $\pi_3$, so upon passing to $2$-connective covers we get another pullback square of infinite loop spaces
\[\begin{tikzcd} \Top/\OO \rar \dar & \GG/\OO\langle 2 \rangle \dar \\
	\Top/\PL \rar & \GG/\PL\langle 2 \rangle.\end{tikzcd}\]
This implies that the classes $k_1$ and $k_2$ in the cohomology of $\Top/\PL$ are pulled back from the cohomology of $\GG/\PL\langle 2 \rangle$. In the \cref{sec:madsenmilgram} we will understand the bottom map, in \cref{sec:cohomology} we will compute some relevant cohomology groups, and in \cref{sec:proofs} we will combine these results to prove \cref{athm:k}.

\section{The work of Madsen--Milgram} \label{sec:madsenmilgram}
There is a fibre sequence
\[\Top/\PL \lra \GG/\PL \lra \GG/\Top\]
whose right map (2-locally, per convention) admits a simple description. Let 
\[E \coloneqq  K(\bb{Z}/2,2) \times_{\delta \Sq^2 \iota_2} K(\bb{Z},4)\]
denote the 2-stage Postnikov system with $k$-invariant given by $\beta \Sq^2 \iota_2 \in H^5(K(\bb{Z}/2,2);\bb{Z})$ with $\beta \colon H^4(K(\bb{Z}/2,2);\bb{Z}/2) \to H^5(K(\bb{Z}/2,2);\bb{Z})$ the Bockstein for the short exact sequence of coefficients $0 \to \bb{Z} \to \bb{Z} \to \bb{Z}/2 \to 0$ (c.f.~\cite[Theorem 4.8]{MadsenMilgram}).

\begin{proposition}The map $p \colon \GG/\PL \to \GG/\Top$ is equivalent to the product of a map
	\[f \colon E \lra K(\bb{Z}/2,2) \times K(\bb{Z},4)\]
	with the identity map of $\prod_{k \geq 2} K(\bb{Z}/2,4k+2) \times \prod_{\ell \geq 2} K(\bb{Z},4\ell)$.
\end{proposition}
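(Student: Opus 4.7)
The plan is to invoke Sullivan's and Madsen--Milgram's $2$-local splittings of $\GG/\Top$ and $\GG/\PL$, and then identify the map $p$ factor by factor.

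First, I would recall the ambient splittings. Sullivan's theorem, in the form proved by Madsen--Milgram, gives an equivalence
\[\GG/\Top \simeq \prod_{k \geq 1} K(\bb{Z}/2, 4k+2) \times \prod_{\ell \geq 1} K(\bb{Z}, 4\ell),\]
with the degree $4k+2$ factors detected by Kervaire classes and the degree $4\ell$ factors by (variants of) the $L$-classes. The parallel description of $\GG/\PL$ gives
\[\GG/\PL \simeq Y \times \prod_{k \geq 2} K(\bb{Z}/2, 4k+2) \times \prod_{\ell \geq 2} K(\bb{Z}, 4\ell),\]
where the first two Postnikov stages are bound together in $Y = K(\bb{Z}/2, 2) \times_{\delta \Sq^2 \iota_2} K(\bb{Z}, 4)$, and the higher factors match those of $\GG/\Top$.

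Next, I would identify $p$ factor by factor. Its fibre is $\Top/\PL \simeq K(\bb{Z}/2,3)$, which has homotopy only in degree $3$, so $p$ is an isomorphism on $\pi_n$ for $n \neq 4$ and multiplication by $2$ on $\pi_4$. For each $k \geq 2$, the projection of $\GG/\Top$ onto $K(\bb{Z}/2, 4k+2)$ corresponds to a canonical class in $H^{4k+2}(\GG/\Top;\bb{Z}/2)$; its pullback under $p^*$ is the analogous class on $\GG/\PL$ and provides a retraction onto the matching factor. The same argument handles the degree $4\ell$ factors for $\ell \geq 2$, so $p$ restricts to the identity on each high-dimensional factor. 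The induced map $f \colon Y \to K(\bb{Z}/2,2) \times K(\bb{Z},4)$ on the low-dimensional part is then determined up to homotopy by its effect on $\pi_2$ and $\pi_4$, which by the above are the identity and multiplication by $2$.

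The main obstacle will be extracting this conclusion in sufficiently functorial form from Madsen--Milgram's splittings: these are stated as equivalences of spaces, and one must verify that the retractions onto the higher factors on the two sides really are compatible under $p$. This reduces to matching canonical cohomology classes across the square and is essentially carried out in the course of their proofs; the present proposition should be extractable by assembling these ingredients and checking the low-dimensional homotopy-group calculation above.
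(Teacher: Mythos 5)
Your overall strategy is the same as the paper's: both arguments rest on the Madsen--Milgram characteristic-class splittings of $\GG/\Top$ and $\GG/\PL$ and on the compatibility under $p$ of the classes in degrees $4k+2$ ($k \geq 2$) and $4\ell$ ($\ell \geq 2$), which holds essentially by construction since Madsen--Milgram choose the classes $\widetilde{K}_{4\ell}$ on $\GG/\PL$ to correspond to those on $\GG/\Top$. Your homotopy-group computation for $f$ is correct, but note that the proposition does not require identifying $f$; the paper only uses that computation afterwards, to describe the induced map of $2$-connective covers.

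The genuine gap is the step you yourself flag as ``the main obstacle'': you never actually produce equivalences $\GG/\PL \simeq Y \times \prod(\cdots)$ and $\GG/\Top \simeq K(\bb{Z}/2,2) \times K(\bb{Z},4) \times \prod(\cdots)$ under which $p$ becomes $f \times \id$. Compatible projections (your ``retractions'') onto the high-degree factors only give a map of fibre sequences, which identifies the map $f$ induced on fibres; to exhibit $p$ itself as a product one also needs compatible splittings on both sides, and that is where the content lies. The paper's device is this: the projection $\GG/\PL \to \prod_{k \geq 2} K(\bb{Z}/2,4k+2) \times \prod_{\ell \geq 2} K(\bb{Z},4\ell)$ has fibre $Y$, which is $4$-truncated, so the obstructions to a section lie in $H^{\leq 5}$ of the $5$-connected base and vanish; choosing a section $s$, the $H$-space sums $(i+s)$ and $(j+ps)$ are equivalences fitting into a commutative square with vertical maps $f \times \id$ and $p$. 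The key trick is that the equivalence on the $\GG/\Top$ side is built out of $ps$ itself, which is exactly what makes the compatibility you were worried about automatic: one only needs $jf = pi$ and the additivity of $p$ as a map of infinite loop spaces, not a separate matching of sections on the two sides. Your write-up correctly assembles the ingredients but stops short of this assembly argument, so as it stands the proof is incomplete at its central step.
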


\begin{proof}There is a map of fibre sequences of infinite loop spaces
	\[\begin{tikzcd} \tau_{\geq 5} \GG/\PL \rar \dar{\circled{1}} & \GG/\PL \dar \rar & \tau_{\leq 4} \GG/\PL \dar{\circled{2}} \\
	\tau_{\geq 5} \GG/\Top \rar & \GG/\Top \rar & \tau_{\leq 4} \GG/\Top\end{tikzcd}\]
	By \cite[Theorem 7.1]{MadsenMilgram}, we can identify the bottom row as double loop spaces with the split fibre sequence of Eilenberg--Mac Lane spaces
	\[\prod_{k \geq 2} K(\bb{Z}/2;4k+2) \times \prod_{\ell \geq 2} K(\bb{Z},4\ell) \lra \prod_{k \geq 1} K(\bb{Z}/2;4k+2) \times \prod_{\ell \geq 1} K(\bb{Z},4\ell) \lra K(\bb{Z}/2,2) \times K(\bb{Z},4).\] 
	Using the equivalence $\Top/\PL \simeq K(\bb{Z}/2,3)$ the map $\circled{1}$ is an equivalence of infinite loop spaces, the top row is also split and the middle map can be written as a product $\circled{1} \times \circled{2}$. We have already identified $\circled{1}$ with the identity map of a product of Eilenberg--Mac Lane spaces and by \cite[Theorem 7.25]{MadsenMilgram} the map $\circled{2}$ can be identified with a map
	\[E \lra K(\bb{Z}/2) \times K(\bb{Z},4).\qedhere\]
\end{proof}

From the long exact sequence of homotopy groups and the identification of the fibre of $p$ with $\Top/\PL$, we see that on homotopy groups the composition 
\[Y \lra \GG/\PL \overset{p}\lra \GG/\Top \lra K(\bb{Z}/2,2) \times K(\bb{Z},4)\]
is given by identity on $\pi_2$ and multiplication-by-2 on $\pi_4$. It follows that the map of 2-connected covers $\GG/\PL\langle 2 \rangle \to \GG/\Top \langle 2 \rangle$ is equivalent to the product of the multiplication-by-2 map $2{\cdot}(-)\colon K(\bb{Z},4) \to K(\bb{Z},4)$ with the identity map of $\prod_{k \geq 2} K(\bb{Z}/2,4k+2) \times \prod_{\ell \geq 2} K(\bb{Z},4\ell)$. This leads to the following description of the map $\Top/\PL \to \GG/\PL \langle 2 \rangle$, using that the inclusion $K(\bb{Z}/2,3) \to K(\bb{Z},4)$ of the fibre of $2{\cdot}(-)$ represents $\beta \iota_3 \in H^4(K(\bb{Z}/2,3);\bb{Z})$, the Bockstein of the fundamental class.

\begin{corollary}\label{cor:map-description} The map $\Top/\PL \to \GG/\PL \langle 2\rangle$ is equivalent to the product of
	\[\beta \iota_3 \colon K(\bb{Z}/2,3) \lra K(\bb{Z},4),\]
with the inclusion of the basepoint into $\prod_{k \geq 2} K(\bb{Z}/2,4k+2) \times \prod_{\ell \geq 2} K(\bb{Z},4\ell)$.
\end{corollary}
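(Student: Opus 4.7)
The argument is essentially formal given the preceding proposition and the identification of the map $\GG/\PL\langle 2\rangle \to \GG/\Top\langle 2\rangle$ in the paragraph immediately before the corollary. First I would identify the 2-connective covers involved. Setting $W \coloneqq \prod_{k \geq 2} K(\bb{Z}/2, 4k+2) \times \prod_{\ell \geq 2} K(\bb{Z}, 4\ell)$, the space $W$ is $7$-connected and hence equals its own 2-connective cover. Since $Y$ was defined via the fibre sequence $K(\bb{Z}, 4) \to Y \to K(\bb{Z}/2, 2)$, its 2-connective cover is $K(\bb{Z}, 4)$; likewise $(K(\bb{Z}/2, 2) \times K(\bb{Z}, 4))\langle 2\rangle \simeq K(\bb{Z}, 4)$. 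Passing the product decomposition from the preceding proposition through $(-)\langle 2\rangle$ thus yields equivalences $\GG/\PL\langle 2\rangle \simeq K(\bb{Z}, 4) \times W$ and $\GG/\Top\langle 2\rangle \simeq K(\bb{Z}, 4) \times W$, under which the induced map is the product of multiplication-by-$2$ on $K(\bb{Z}, 4)$ with the identity on $W$.

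Next I would check that $\Top/\PL$ is still the fibre after passing to 2-connective covers. The map $\GG/\PL \to \GG/\Top$ is an isomorphism on $\pi_2$ since its fibre $\Top/\PL$ is 2-connected, so comparing the fibre sequences $\GG/\PL\langle 2\rangle \to \GG/\PL \to K(\bb{Z}/2, 2)$ and $\GG/\Top\langle 2\rangle \to \GG/\Top \to K(\bb{Z}/2, 2)$ shows that $\Top/\PL \to \GG/\PL\langle 2\rangle \to \GG/\Top\langle 2\rangle$ remains a fibre sequence.

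Finally I would analyse the two factor components of $\Top/\PL \to K(\bb{Z}, 4) \times W$. Its component into $W$ factors as $\Top/\PL \to \GG/\PL\langle 2\rangle \to \GG/\Top\langle 2\rangle \to W$ because the map on $W$-factors is the identity, and this composition is null-homotopic since the first two maps already compose to zero. Taking $\widetilde{f}$ to be the $K(\bb{Z}, 4)$-component of the map then gives the claimed decomposition $(\widetilde{f}, \ast)$, and the domain is identified with $K(\bb{Z}/2, 3)$ as the fibre of multiplication-by-$2$ on $K(\bb{Z}, 4)$. The only point demanding care is that the product decompositions of $\GG/\PL$ and $\GG/\Top$ from the preceding proposition are compatible under $p$, which is precisely the content of the commutative square at the end of its proof; beyond this, no serious obstacle is expected.
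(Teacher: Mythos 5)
Your argument is correct and follows exactly the route the paper intends: the paper states this corollary without a separate proof, deriving it from the preceding paragraph's identification of $\GG/\PL\langle 2\rangle \to \GG/\Top\langle 2\rangle$ as $(\times 2)\times \id$ and the fact that $\Top/\PL$ remains the fibre after passing to $2$-connective covers. Your writeup simply makes explicit the details (connectivity of the product factor, preservation of the fibre, vanishing of the $W$-component) that the paper leaves implicit, and all of these checks are sound.
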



\section{Some cohomology groups of $K(\bb{Z},4)$} \label{sec:cohomology}
Given \cref{cor:map-description}, it will be crucial to understand some cohomology groups of the Eilenberg--Mac Lane space $K(\bb{Z},4)$. We will need two reduction maps
\begin{align*}\overline{(-)} &\colon H^*(X;\bb{Z}) \lra H^*(X;\bb{Z}/4) \\
	\widetilde{(-)} &\colon H^*(X;\bb{Z}/4) \lra H^*(X;\bb{Z}/2)\end{align*}
We will also use $\widetilde{(-)}$ for the composition $H^*(X;\bb{Z}) \to  H^*(X;\bb{Z}/2)$ of these maps.

\begin{lemma}\label{lem:cohomology} We have that 
	\[H^8(K(\bb{Z},4);\bb{Z}/4) \cong \bb{Z}/4 \cdot \{\overline{\iota}_4^2\} \quad \text{and} \quad H^9(K(\bb{Z},4);\bb{Z}/2) = 0.\]
\end{lemma}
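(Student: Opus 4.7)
The plan is to run the computation in mod-$2$ cohomology first and then deduce the mod-$4$ statement from a Bockstein long exact sequence, resolving the resulting extension by a pullback.

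I would begin by invoking Serre's description of $H^*(K(\bb{Z},n);\bb{Z}/2)$ as a polynomial algebra on the classes $\Sq^I \iota_n$ indexed by admissible sequences $I$ of excess $e(I) < n$ not ending in $\Sq^1$. Setting $n = 4$ and enumerating admissible $I$ of weight at most $5$, the only qualifying generators of degree $\leq 9$ are $\iota_4$, $\Sq^2 \iota_4$, and $\Sq^3 \iota_4$ in degrees $4, 6, 7$; the next qualifying generator is $\Sq^4 \Sq^2 \iota_4$, which lives in degree $10$. Counting products in each degree then reads off
\[H^7(K(\bb{Z},4);\bb{Z}/2) \cong \bb{Z}/2\{\Sq^3 \iota_4\}, \quad H^8 \cong \bb{Z}/2\{\iota_4^2\}, \quad H^9 = 0,\]
and the last identity is the second half of the lemma.

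For the first half I would feed this into the Bockstein long exact sequence associated with $0 \to \bb{Z}/2 \to \bb{Z}/4 \to \bb{Z}/2 \to 0$, whose connecting homomorphism is $\Sq^1$. The Adem relation $\Sq^1 \Sq^3 = 0$ gives $\Sq^1(\Sq^3 \iota_4) = 0$, and together with $H^9(K(\bb{Z},4);\bb{Z}/2) = 0$ this collapses the relevant segment to
\[0 \to \bb{Z}/2 \to H^8(K(\bb{Z},4);\bb{Z}/4) \to \bb{Z}/2 \to 0,\]
so $H^8(K(\bb{Z},4);\bb{Z}/4)$ is abstractly either $\bb{Z}/4$ or $\bb{Z}/2 \oplus \bb{Z}/2$.

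The main obstacle is to rule out the split extension. For this I would pull back along the classifying map $\bb{H}P^2 \to K(\bb{Z},4)$ of a generator $u$ of $H^4(\bb{H}P^2;\bb{Z}) \cong \bb{Z}$: then $\iota_4^2$ pulls back to $u^2$, a generator of $H^8(\bb{H}P^2;\bb{Z}) \cong \bb{Z}$, so $\overline{\iota}_4^2$ pulls back to a class of order $4$ in $H^8(\bb{H}P^2;\bb{Z}/4) \cong \bb{Z}/4$. This forces $\overline{\iota}_4^2$ itself to have order $4$ in $H^8(K(\bb{Z},4);\bb{Z}/4)$, identifying that group with $\bb{Z}/4 \cdot \{\overline{\iota}_4^2\}$. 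The Serre enumeration and the Bockstein argument are routine; the $\bb{H}P^2$ pullback is the step that actually contains the content.
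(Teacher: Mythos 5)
Your proof is correct and, for most of its length, runs parallel to the paper's: the enumeration of Serre generators of $H^*(K(\bb{Z},4);\bb{Z}/2)$ in degrees $\leq 9$ and the reduction of the mod-$4$ question to an extension problem via the Bockstein long exact sequence (using $\Sq^1\Sq^3=0$ and $H^9(K(\bb{Z},4);\bb{Z}/2)=0$) are exactly the paper's steps. Where you genuinely diverge is in resolving the extension. The paper notes that $H^8(K(\bb{Z},4);\bb{Q})\cong\bb{Q}$ is generated by $\iota_4^2$, deduces from the universal coefficient theorem a subgroup $\bb{Z}/4\subset H^8(K(\bb{Z},4);\bb{Z}/4)$ with generator $y$ satisfying $c\cdot y=\overline{\iota}_4^2$ for some power of $2$, and then uses the mod-$2$ reduction to conclude $c=1$. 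Your alternative --- pulling back along the map $\bb{H}P^2\to K(\bb{Z},4)$ classifying a generator $u$ of $H^4(\bb{H}P^2;\bb{Z})$, where $\overline{u}^2$ visibly has order $4$ in $H^8(\bb{H}P^2;\bb{Z}/4)\cong\bb{Z}/4$ because $H^*(\bb{H}P^2;\bb{Z})$ is torsion-free --- is valid and arguably more economical: it shows in one stroke that $\overline{\iota}_4^2$ has order exactly $4$, which together with the four-element count from the Bockstein sequence identifies the group, and it sidesteps any discussion of possible divisibility of $\iota_4^2$ in integral cohomology. The only cosmetic slip is that the degree-$8$ generator of the mod-$2$ cohomology should be written as the reduction $\widetilde{\iota}_4^{\,2}$ rather than $\iota_4^2$.
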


\begin{proof}We start with $\bb{Z}/2$-cohomology. By \cite[Chapter 9, Theorem 3]{MosherTangora}, the cohomology ring $H^*(K(\bb{Z},4);\bb{Z}/2)$ is the polynomial ring on generators $\Sq^I(\iota_4)$ with $I = (i_1,\cdots,i_r)$ an admissible sequence of excess $e(I)<4$ and $i_r \neq 1$. This shows that 
	\begin{align*}H^7(K(\bb{Z},4);\bb{Z}/2) &= \bb{Z}/2 \cdot \{\Sq^3 \widetilde{\iota}_4\}, \\
	H^8(K(\bb{Z},4);\bb{Z}/2) &= \bb{Z}/2 \cdot \{\widetilde{\iota}_4^2\}, \\
	H^9(K(\bb{Z},4);\bb{Z}/2) &= 0\end{align*}
with $\iota_4 \in H^4(K(\bb{Z},4);\bb{Z})$ the generator and $\widetilde{\iota}_4$ its reduction modulo 2. 

To determine the $\bb{Z}/4$-cohomology in degree $8$ we use that $H^8(K(\bb{Z},4);\bb{Q}) = \bb{Q}$ generated by $\iota_4^2$. The universal coefficient theorem then tells us that there must be a $\bb{Z}/4 \subset H^8(K(\bb{Z},4);\bb{Z}/4)$, generated by an element $y$ satisfying $c \cdot y = \overline{\iota}_4^2$ for some $c \geq 1$. Working $2$-locally, we may assume that $c$ is a power of $2$. Since the Bockstein homomorphism associated to the long exact sequence $0 \to \bb{Z}/2 \to \bb{Z}/4 \to \bb{Z}/2 \to 0$ of coefficients agrees with $\Sq^1$ and hence vanishes on $H^7$ and $H^8$ using the Adem relations, the Bockstein long exact sequence gives a short exact sequence
	\[0 \lra H^8(K(\bb{Z},4);\bb{Z}/2) \lra H^8(K(\bb{Z},4);\bb{Z}/4) \lra H^8(K(\bb{Z},4);\bb{Z}/2) \lra 0\]
	with right map given by the reduction homomorphism $H^*(X;\bb{Z}/4) \to H^*(X;\bb{Z}/2)$. This shows that the cohomology group $H^8(K(\bb{Z},4);\bb{Z}/4)$ has $4$ elements, so must be equal to $\bb{Z}/4$. Moreover, the reduction modulo 2 maps its generator $y$ to $\widetilde{\iota}_4^2$ and we must have $c=1$.
\end{proof}

For an infinite loop space $X$, or more generally an $H$-space, the product map
\[\mu \colon X \times X \lra X\]
induces a map $\Delta \colon H^*(X;A) \to H^*(X \times X;A)$ on cohomology with coefficients in $A$. If $A$ is a commutative ring, then the universal coefficients theorem provides an injective map $H^*(X;A) \otimes_A H^*(X;A) \to H^*(X \times X;A)$. We say that $x \in H^*(X;A)$ is \emph{primitive} if $\Delta(x)$ is the image of $x \otimes 1 + 1 \otimes x$ under this inclusion. 


\begin{lemma}\label{lem:prim-kz4} The primitive elements in $H^8(K(\bb{Z},4);\bb{Z}/4)$ are $0$ and $2 \cdot \overline{\iota}_4^2$.\end{lemma}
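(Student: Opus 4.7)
The plan is to exploit the Hopf-algebra structure directly: since $K(\bb{Z},4)$ is an Eilenberg--Mac Lane space, the fundamental class $\iota_4 \in H^4(K(\bb{Z},4);\bb{Z})$ is primitive, i.e.\ $\mu^*(\iota_4) = \iota_4 \otimes 1 + 1 \otimes \iota_4$. Naturality of the reduction map makes this also hold after reducing mod $4$: $\mu^*(\overline{\iota}_4) = \overline{\iota}_4 \otimes 1 + 1 \otimes \overline{\iota}_4$.

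Since $\mu^*$ is a ring homomorphism, squaring this identity (in the graded-commutative ring $H^*(K(\bb{Z},4) \times K(\bb{Z},4);\bb{Z}/4)$) gives
\[\mu^*(\overline{\iota}_4^2) = \overline{\iota}_4^2 \otimes 1 + 2\,\overline{\iota}_4 \otimes \overline{\iota}_4 + 1 \otimes \overline{\iota}_4^2,\]
where by the K\"unneth inclusion I regard the right-hand side as living in $H^8(K(\bb{Z},4)\times K(\bb{Z},4);\bb{Z}/4)$. More generally, for $n \in \bb{Z}/4$,
\[\mu^*(n\cdot\overline{\iota}_4^2) = n\cdot\overline{\iota}_4^2 \otimes 1 + 2n\,\overline{\iota}_4 \otimes \overline{\iota}_4 + 1 \otimes n\cdot\overline{\iota}_4^2.\]
Thus $n \cdot \overline{\iota}_4^2$ is primitive if and only if $2n = 0$ in $\bb{Z}/4$, provided the class $\overline{\iota}_4 \otimes \overline{\iota}_4$ is of order $4$ inside the target.

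The only step that requires a brief check is this non-vanishing. Because $K(\bb{Z},4)$ is $3$-connected, the Hurewicz and universal coefficients theorems give $H^4(K(\bb{Z},4);\bb{Z}/4) = \bb{Z}/4 \cdot \{\overline{\iota}_4\}$, so $\overline{\iota}_4$ has order $4$; the K\"unneth map $H^4 \otimes H^4 \hookrightarrow H^8$ is injective and $\bb{Z}/4$-linear, so $\overline{\iota}_4 \otimes \overline{\iota}_4$ also has order $4$. Consequently $2n \cdot \overline{\iota}_4 \otimes \overline{\iota}_4 = 0$ exactly for $n \in \{0,2\} \subset \bb{Z}/4$.

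Combined with \cref{lem:cohomology}, which identifies $H^8(K(\bb{Z},4);\bb{Z}/4)$ with $\bb{Z}/4 \cdot \{\overline{\iota}_4^2\}$, this shows that the primitives in $H^8(K(\bb{Z},4);\bb{Z}/4)$ are exactly $0$ and $2\cdot\overline{\iota}_4^2$, as claimed. There is no serious obstacle: the only thing one must guard against is sloppiness about whether $2 \overline{\iota}_4 \otimes \overline{\iota}_4$ survives in the target, which is resolved by the elementary order count above.
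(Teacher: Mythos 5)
Your proof is correct and follows essentially the same route as the paper: both use that $\overline{\iota}_4$ is primitive and that the coproduct is a ring homomorphism to compute $\mu^*(n\,\overline{\iota}_4^2) = n\,\overline{\iota}_4^2 \otimes 1 + 2n\,\overline{\iota}_4 \otimes \overline{\iota}_4 + 1 \otimes n\,\overline{\iota}_4^2$, concluding that exactly the $2$-multiples are primitive. Your extra check that $\overline{\iota}_4 \otimes \overline{\iota}_4$ has order $4$ under the K\"unneth inclusion is a welcome bit of care that the paper leaves implicit.
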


\begin{proof}Since $\Delta$ is a homomorphism with respect to the cup product, we have that $\Delta(\overline{\iota}_4^2) = \overline{\iota}_4^2 \otimes 1 + 2 \overline{\iota}_4 \otimes \overline{\iota}_4 + 1 \otimes \overline{\iota}_4^2$, so $\overline{\iota}_4^2$ is not primitive. However, a 2-multiple of it visibly is.\end{proof}

\section{Proof of \cref{athm:k}} \label{sec:proofs} To compute the first Postnikov $k$-invariant $k_1$, our starting point is the map of fibre sequences, up to degree 8, of infinite loop spaces
\[\begin{tikzcd} \Top/\OO \rar \dar & \Top/\PL \simeq K(\bb{Z}/2,3) \rar{\pi_1} \dar{\phi} & K(\bb{Z}/4,8) \dar[equal] \\ 
\GG/\OO\langle 2 \rangle \rar & \GG/\PL\langle 2 \rangle \rar{\pi_2} & K(\bb{Z}/4,8).\end{tikzcd}\]
Then we have $k_1 = \pi_1^*(\iota_8)$, the pullback of the identity element $\iota_8 \in H^8(K(\bb{Z}/4,8);\bb{Z}/4)$ along the right map in the top fibre sequence of infinite loop spaces. By naturality $\pi_1^*(\iota_8) = \phi^* \pi_2^*(\iota_8)$. Note that $x \coloneqq \pi_2^*(\iota_8)$ is primitive because (i) pulling back along a map of infinite loop spaces, or more generally $H$-spaces, preserves primitive elements and (ii) for degree reasons, the identity element $\iota_8 \in H^8(K(\bb{Z}/4,8);\bb{Z}/4)$ is primitive.

By \cref{cor:map-description} the map $\phi$ factors as
\[\phi \colon \Top/\PL \simeq K(\bb{Z}/2,3) \overset{\beta \iota_3}{\lra}  K(\bb{Z},4) \overset{i'}\lra \GG/\PL\langle 2 \rangle\]
so $k_1 = (\beta \iota_3)^* (i')^*(x)$. The right map $i' \colon K(\bb{Z},4) \to \GG/\PL\langle 2 \rangle$ is obtained as a connective cover of the inclusion $i \colon Y \to \GG/\PL$, which is a map of $H$-spaces by \cite[Theorem 4.34]{MadsenMilgram} (in fact, of double loop spaces by \cite[Theorem 7.25]{MadsenMilgram}), and hence so is $i'$. Hence $(i')^*(x)$ is primitive, and thus the element $k_1$ is obtained by pulling back a primitive element in $H^8(K(\bb{Z},4);\bb{Z}/4)$. In \cref{lem:prim-kz4} we saw all primitive elements in $H^8(K(\bb{Z},4);\bb{Z}/4)$ are 2-multiples, and hence must vanish upon pulling back to $H^8(K(\bb{Z}/2,3);\bb{Z}/4)$, as the latter is 2-torsion. This can be seen at least in the following two ways: (1) combine the universal coefficients theorem (including its non-natural splitting) and \cite[Table C.2]{Clement}, or (2) apply \cite[Proposition 3E.3]{Hatcher}, using the description of $H^*(K(\bb{Z}/2,3);\bb{Z}/2)$ of \cite[Chapter 9, Theorem 2]{MosherTangora} and recalling that $\Sq^1$ is the Bockstein for the short exact sequence of coefficients for $0 \to \bb{Z}/2 \to \bb{Z}/4 \to \bb{Z}/2 \to 0$ \cite[Chapter 3, Theorem 1(4)]{MosherTangora}.

\smallskip

To compute the second Postnikov $k$-invariant $k_2$, we argue similarly that it is the pullback of an element of $H^9(K(\bb{Z},4);\bb{Z}/2)$. But this group vanishes by \cref{lem:cohomology}.

\bibliographystyle{amsalpha}
\bibliography{refs}

\vspace{+0.2cm}
\end{document}